\numberwithin{equation}{section}
\newtheorem{thm}{Theorem}[section]
\newtheorem{lemma}[thm]{Lemma}
\newtheorem{prop}[thm]{Proposition}
\theoremstyle{definition}
\newtheorem{definition}[thm]{Definition}
\theoremstyle{remark}
\newcommand{\comment}[1]{}
\newcommand{\p}{\partial}
\newcommand{\calT}{\mathcal{T}}
\newcommand{\MJN}[1]{\textcolor{black}{#1}}
\title{A note on the shape regularity of Worsey-Farin splits}
\author[S. Gong]{Sining Gong}
\address{Division of Applied Mathematics, Brown University, Providence, RI 02912 }
\email{sining\_gong@brown.edu}
\author[J. Guzm\'an]{Johnny Guzm\'an}
\address{Division of Applied Mathematics, Brown University, Providence, RI 02912 }
\email{johnny\_guzman@brown.edu}           
\thanks{
The second author was supported in part by NSF grant DMS--1913083.  The third
author was supported in part by NSF grant DMS--2011733.}
\author[M. Neilan]{Michael Neilan}   
\address{Department of Mathematics, University of Pittsburgh, Pittsburgh, PA 15260}
\email{neilan@pitt.edu}
\begin{document}

\maketitle

\begin{abstract}
We prove \MJN{three-dimensional} Worsey-Farin refinements
inherit their parent triangulations' shape regularity.
\end{abstract}

\thispagestyle{empty}

\section{Introduction}
Three-dimensional Worsey-Farin splits
were first introduced in \cite{worsey1987ann} 
to construct low-order $C^1$ splines
on simplicial triangulations,
and  they have been extensively studied since then; see for example \cite{lai2007spline}. 
 Recently it has been shown that  smooth piecewise polynomial spaces
 on Worsey-Farin splits (and related ones) fit into discrete de Rham \MJN{complexes}.  
 \cite{guzman2020exact,guzman2022exact, hu2022partially, christiansen2018generalized, fu2020exact}. 
 These results are further applied to analyze convergence, stability and accuracy of numerical methods for models of incompressible fluids on theses 
 refinements \cite{CLM2021,max3D2022,KNS22}. Therefore, it is necessary to discuss the properties of these refinements, 
 especially {in the context of} approximation and stability properties of the corresponding discrete spaces. One critical geometric property for approximation theory is the shape regularity of 
 the underlying mesh.    
 
The shape regularity of Worsey-Farin splits
 are required to ensure optimal-order
and uniform interpolation estimates  
in \cite[Theorem 18.15]{lai2007spline}, \cite[Theorem 6.3]{AlfeldSchumaker05}, 
 \cite[Theorem 6.2]{Sorokina09}, and  \cite[Theorem 8.14]{Matt12}.
Stability estimates of a finite element 
method in \cite{Fabienetal22}
defined on Worsey-Farin splits also require
regularity of the refined triangulation.
The references
 \cite[Page 515]{lai2007spline}, \cite[Remark 14]{AlfeldSchumaker05},
 and \cite[Page 54]{AlfeldSorokina09} explicitly conjecture
 that  Worsey-Farin splits
 of a family of shape regular meshes remain shape regular.
However, to the best of our knowledge, 
a proof of this result has not appeared in the literature. In this note we fill in this gap.

  In \cite[Lemma 4.20]{lai2007spline} and \cite[Lemma 2.6]{KNS22},
  the relationship between the shape regularity constant of Powell-Sabin splits and the parent triangulations is shown. Namely, this result is proved by establishing bounds of the angles of each macro triangle. 
  Hence, it is natural to focus on the dihedral angles in the three-dimensional Worsey-Farin case.   We first prove the dihedral angles are bounded by quantities that only  
depend on the shape regularity of the original mesh (see Lemma \ref{lem:cos} below). Using this result we prove the crucial result that the split points of each face $F$ 
  in the triangulation is uniformly bounded away from $\partial F$; see Lemma \ref{lem:splitF}. From this result, the shape regularity of Worsey-Farin refinements is then shown.

This paper is organized as follows. In Section \ref{sec:Prelim}, we recall the Worsey-Farin refinement of a
three-dimensional simplicial mesh and present some notations to better illustrate our main analysis. In Section \ref{sec:analysis}, we show the shape regularity of Worsey-Farin splits is 
solely determined by the shape regularity of the parent mesh. 

\section{Preliminaries} \label{sec:Prelim}
\subsection{Geometric notations and properties}
We first present some basic definitions regarding the geometric properties of a tetrahedron, see \cite[Definition 16.1-16.2]{lai2007spline} for more details.

Given a tetrahedron $T$, we
denote by $\Delta_m(T)$ the set of $m$-dimensional simplices 
of $T$.  For example, $\Delta_2(T)$ is the set of four faces of 
$T$, and $\Delta_1(T)$ is the set of six edges of $T$.
 Let $\rho_T$ be the diameter of the inscribed sphere $S_T$ of $T$,
which  is the largest sphere contained in $T$. We call the center of $S_T$ the incenter of $T$, denoted by $z_T$, and call the radius of $S_T$ the inradius of $T$, equal to $\rho_T/2$. 
The sphere $S_T$ intersects each face $F$ of $T$ at a unique point, $z_{T,F}$.  
We note that $z_{T,F}$ is the orthogonal projection of the point $z_T$ to the plane that contains $F$ (i.e., the vector $z_T-z_{T,F}$ is normal to $F$).  
Finally, we let $h_T = \text{diam} (T)$.

The following two propositions are well-known results of tetrahedra.  To be self-contained we provide their proofs.
\begin{prop} \label{prop:rhoK}
For a tetrahedron $T$, there holds
\[ 
\rho_T = 6|T|/(\sum\limits_{F\in \Delta_2(T)} |F|).
\]
\end{prop}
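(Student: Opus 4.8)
The plan is to use the classical decomposition of the tetrahedron into four sub-tetrahedra sharing the incenter as a common vertex. Concretely, I would connect the incenter $z_T$ to each of the four faces $F \in \Delta_2(T)$, producing four tetrahedra $T_F$ whose base is $F$ and whose apex is $z_T$. Since $z_T$ lies in the interior of $T$, these four pieces have pairwise disjoint interiors and their union is all of $T$, so $|T| = \sum_{F \in \Delta_2(T)} |T_F|$.

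Next I would compute $|T_F|$. The volume of a tetrahedron is one third of base area times height, so $|T_F| = \tfrac{1}{3} |F| \, d(z_T, F)$, where $d(z_T, F)$ is the distance from $z_T$ to the plane containing $F$. Here I invoke the facts recalled just before the proposition: $z_{T,F}$ is the orthogonal projection of $z_T$ onto that plane, and $z_{T,F}$ lies on the inscribed sphere $S_T$ centered at $z_T$ with radius $\rho_T/2$. Hence $d(z_T, F) = |z_T - z_{T,F}| = \rho_T/2$ for every face $F$.

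Combining the two steps gives
\[
|T| = \sum_{F \in \Delta_2(T)} |T_F| = \sum_{F \in \Delta_2(T)} \frac{1}{3} |F| \cdot \frac{\rho_T}{2} = \frac{\rho_T}{6} \sum_{F \in \Delta_2(T)} |F|,
\]
and solving for $\rho_T$ yields the claimed identity. The only point that needs a word of justification — and the closest thing to an obstacle — is why the projection $z_{T,F}$ actually lands inside the face $F$ (so that $d(z_T,F)$ is genuinely realized at a point of $S_T \cap F$ and the decomposition is the honest one); this follows because $z_T$ is interior to $T$ and $S_T$ is inscribed, i.e. $S_T \subseteq T$, so its point of tangency with the plane of $F$ must lie in $F$. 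Everything else is the elementary volume formula, so the argument is short.
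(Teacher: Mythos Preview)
Your proposal is correct and is essentially the paper's own proof: both decompose $T$ into four subtetrahedra with apex at the incenter $z_T$ and bases the faces of $T$, then use that the height of each piece over its base is $\rho_T/2$ to obtain $|T|=\sum_{F\in\Delta_2(T)}\tfrac13|F|\cdot\tfrac{\rho_T}{2}$. You supply a little extra justification (why $d(z_T,F)=\rho_T/2$ and why $z_{T,F}\in F$), but the argument and decomposition are identical.
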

\begin{proof}
Consider the refinement of $T$ obtained
by connecting the incenter of $T$ to its vertices.
The resulting four subtetrahedra fill the volume of 
$T$, and thus,
\[ 
|T| = \sum\limits_{F\in \Delta_2(T)} \frac{1}{3} |F| \frac{\rho_T}{2},
\]
which gives the result.
\end{proof}

\begin{prop} \label{prop:height}
Given a tetrahedron $T$, let $x$ be any vertex of $T$ and $F_x$ be the face of $T$ which is opposite to $x$. Let $P_x$ be the plane containing $F_x$,  then for any point $a \in \textnormal{int}(T)$, we have 
\begin{equation}\label{312}
\textnormal{dist}(x, P_x)> \textnormal{dist}(a,P_x). 
\end{equation}
In particular,  
\begin{equation}\label{313}
 \textnormal{dist}(x, P_x)> \rho_T. 
 \end{equation}
\end{prop}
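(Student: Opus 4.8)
The plan is to exploit the elementary fact that the \emph{signed} distance to the plane $P_x$ is an affine function on $\bbR^3$, so that on the simplex $T$ it is governed entirely by its values at the four vertices. I would write $x=v_0$ and let $v_1,v_2,v_3$ denote the vertices of $F_x$, and introduce the affine function $\ell:\bbR^3\to\bbR$ whose zero set is $P_x$, normalized so that $\ell(x)=\textnormal{dist}(x,P_x)$. Since $T$ is nondegenerate, $x\notin P_x$, so $H:=\ell(x)=\textnormal{dist}(x,P_x)>0$; and since $v_1,v_2,v_3\in F_x\subset P_x$, one has $\ell(v_1)=\ell(v_2)=\ell(v_3)=0$.

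For \eqref{312}, given $a\in\textnormal{int}(T)$ I would write it in barycentric coordinates, $a=\sum_{i=0}^{3}\lambda_i v_i$ with $\lambda_i>0$ and $\sum_{i=0}^{3}\lambda_i=1$. By affinity of $\ell$,
\[
\ell(a)=\sum_{i=0}^{3}\lambda_i\,\ell(v_i)=\lambda_0 H ,
\]
so $\textnormal{dist}(a,P_x)=|\ell(a)|=\lambda_0 H$. Since $a$ is interior, $0<\lambda_0<1$, whence $\textnormal{dist}(a,P_x)=\lambda_0 H<H=\textnormal{dist}(x,P_x)$, which is \eqref{312}.

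For \eqref{313} it suffices, by \eqref{312}, to exhibit a single point of $\textnormal{int}(T)$ at distance exactly $\rho_T$ from $P_x$. Let $\bn$ be the unit normal to $F_x$ pointing into $T$. Because $z_{T,F_x}$ is the orthogonal projection of the incenter $z_T$ onto $P_x$ and $z_T\in\textnormal{int}(T)$, one has $z_T=z_{T,F_x}+\tfrac{\rho_T}{2}\bn$ and $\textnormal{dist}(z_T,P_x)=\tfrac{\rho_T}{2}$. I would take $q:=z_T+\tfrac{\rho_T}{2}\bn$, the antipode of $z_{T,F_x}$ on the sphere $S_T$; then $q-z_{T,F_x}=\rho_T\bn\perp P_x$, so $\textnormal{dist}(q,P_x)=\rho_T$. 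It then remains to check that $q\in\textnormal{int}(T)$: the closed ball bounded by $S_T$ lies in $T$ and meets each face-plane at the single contact point $z_{T,F}$, so $S_T\cap\partial T=\{z_{T,F}:F\in\Delta_2(T)\}$; but $q=z_T+\tfrac{\rho_T}{2}\bn\ne z_T-\tfrac{\rho_T}{2}\bn=z_{T,F_x}$, and $q\ne z_T-\tfrac{\rho_T}{2}\bn_{F'}=z_{T,F'}$ for every other face $F'$, since no two faces of a tetrahedron are parallel (so $\bn\ne-\bn_{F'}$). Hence $q$ lies on $S_T$ but not on $\partial T$, so $q\in\textnormal{int}(T)$, and \eqref{312} applied to $a=q$ yields $\textnormal{dist}(x,P_x)>\rho_T$.

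I expect the only nonroutine point to be this last step — verifying that $q$ lies in the \emph{interior} of $T$ and not merely in $T$. This is exactly what upgrades the bound from $\rho_T/2$, which would follow immediately by applying \eqref{312} to the incenter itself, to the claimed $\rho_T$; it hinges on the inscribed ball touching $\partial T$ only at the four contact points and on a tetrahedron having no pair of parallel faces.
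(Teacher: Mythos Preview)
Your proof is correct. For \eqref{312} the paper argues by volume comparison---the tetrahedron with apex $a$ and base $F_x$ is strictly contained in $T$, so $\tfrac13|F_x|\,\textnormal{dist}(a,P_x)<|T|=\tfrac13|F_x|\,\textnormal{dist}(x,P_x)$---whereas you use barycentric coordinates and the affinity of the signed distance; both are equally elementary and essentially equivalent. For \eqref{313} the paper takes exactly the same antipodal point on $S_T$ that you call $q$, but simply asserts that it lies in $\textnormal{int}(T)$ without justification; your argument via $S_T\cap\partial T=\{z_{T,F}:F\in\Delta_2(T)\}$ and the absence of parallel faces in a tetrahedron fills in a detail the paper leaves implicit.
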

\begin{proof}
Since the point $a \in \textnormal{int}(T)$ and $F_x$ is a face of $T$, $a$ and $F_x$ form a tetrahedron $T' \subset T$. Therefore,
\[ 
\frac{1}{3}|F_x|\textnormal{dist}(x, P_x) = \textcolor{black}{|T|}>\textcolor{black}{|T'|}= \frac{1}{3}|F_x|\textnormal{dist}(a, P_x),
\]
which immediately gives  \eqref{312}.
Let $\ell$ be the line containing $z_T$ and $z_{T,F_x}$. Let $\ell$ intersect $S_T$ at $a \neq z_{T,F_x}$. Then $a \in \text{int}(T)$ and $\textnormal{dist}(a,P_x)=|[a,z_{T,F_x}]| =\rho_T$. Hence, \eqref{313} follows from \eqref{312}.
\end{proof}

\begin{figure}
      \centering
      \begin{subfigure}[b]{0.45\textwidth}
          \centering
          \includegraphics[width=\textwidth]{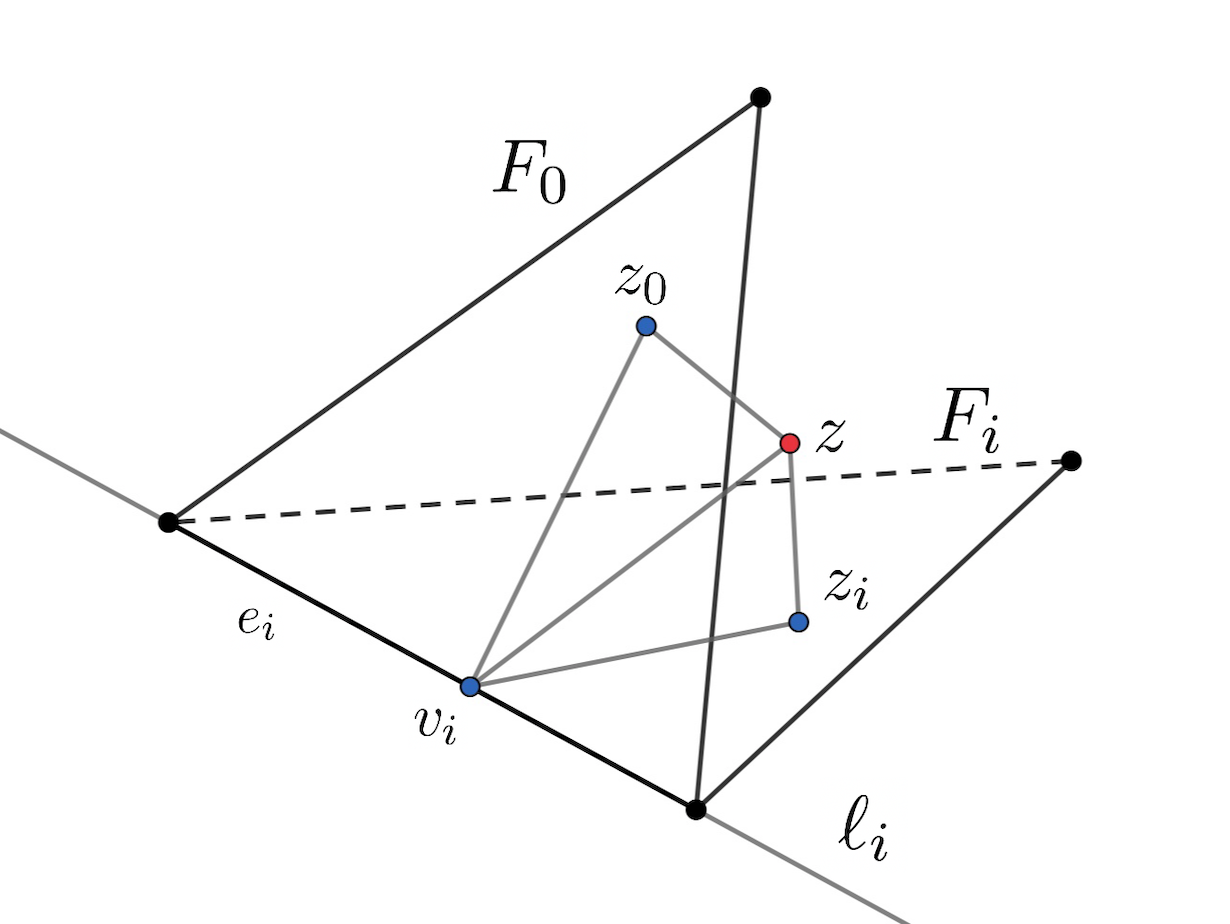}
          \caption{The projection of $z_1$ on face $F_0$ and $F_1$.}
          \label{dihedral1}
      \end{subfigure}
      \hfill
      \begin{subfigure}[b]{0.45\textwidth}
          \centering
          \includegraphics[width=0.9\textwidth]{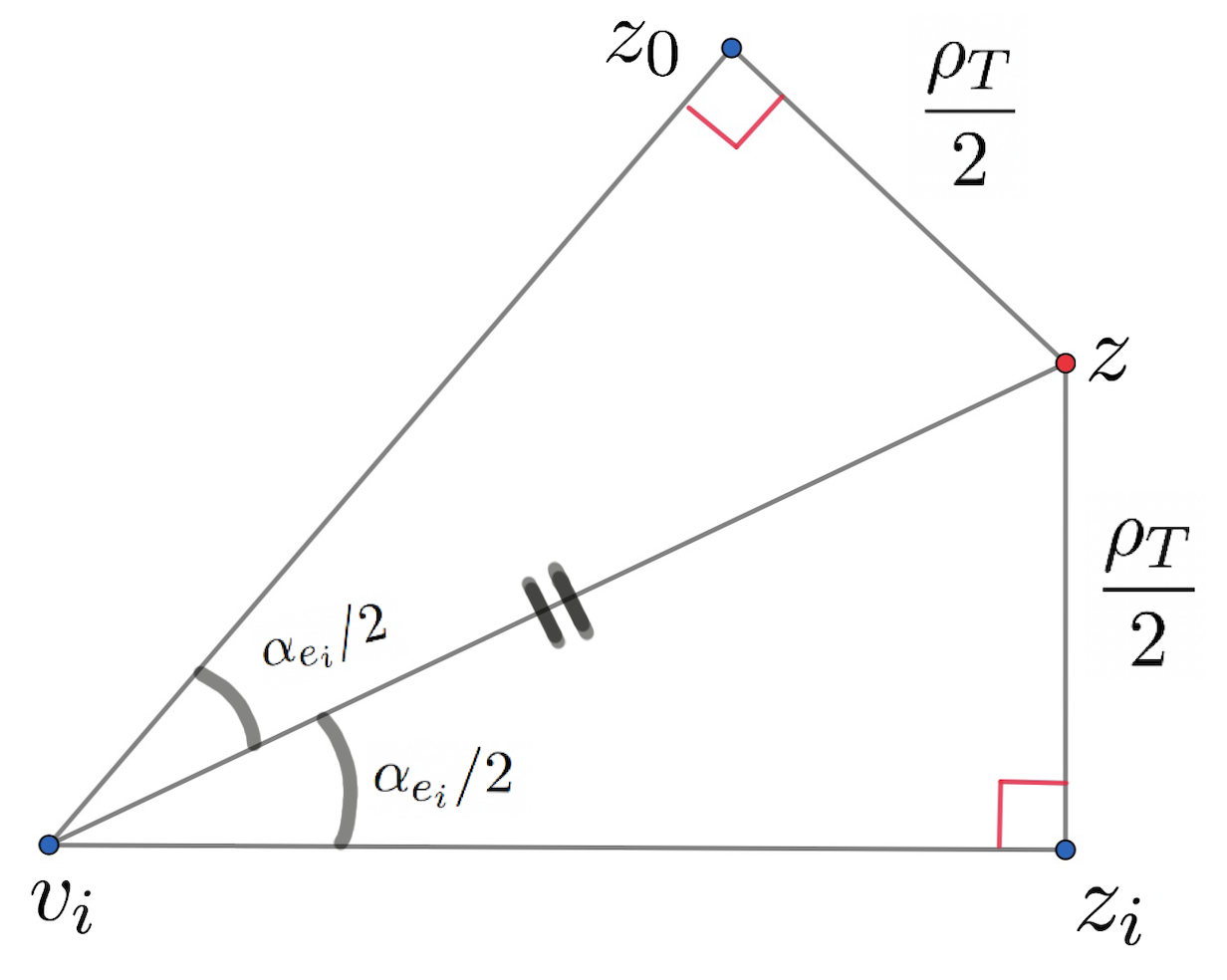}
          \caption{The plane through the incenter.}
          \label{dihedral2}
      \end{subfigure}
      \caption{A representation of the dihedral angle.}
    \label{figdihedral}
 \end{figure}
 
\MJN{We will also need the following result that bounds $\textnormal{dist}(z_{T,F},  \p F)$  from below using the dihedral angles.} 
\begin{lemma}\label{lem:zT2angle}
\MJN{Let $T$ be a tetrahedron, 
and for each face $F\in \Delta_2(T)$,
let $z_{T,F}$ denote the orthogonal projection 
of the incenter of $T$ onto $F$.  
Let 
$\alpha_e$ be the dihedral angle of $T$ with respect
to $e\in \Delta_1(T)$.}
We have 
\begin{equation}\label{equ:distFace}
        \min\limits_{F \in \Delta_2(T)}\textnormal{dist}(z_{T,F},  \p F) 
        \textcolor{black}{\ge } \min\limits_{e \in \Delta_1(T)} \frac{\rho_T}2 \sqrt{\frac{1+\cos(\alpha_e)}{1-\cos(\alpha_e)}}.
\end{equation}
\end{lemma}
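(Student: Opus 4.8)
The plan is to fix a face $F\in\Delta_2(T)$ and estimate $\textnormal{dist}(z_{T,F},\partial F)$ from below by considering, one edge $e\in\Delta_1(F)$ at a time, the distance from $z_{T,F}$ to the line through $e$. Since $\partial F$ is the union of its three edges, the minimum over $e$ of these line-distances is a lower bound (in fact equal to) $\textnormal{dist}(z_{T,F},\partial F)$, so it suffices to bound each such line-distance below by the right-hand side of \eqref{equ:distFace}.

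So fix an edge $e$ shared by $F$ and a second face $F'$ of $T$, and let $\alpha_e$ be the dihedral angle along $e$. First I would set up the right two-dimensional picture: the incenter $z_T$ projects orthogonally onto $F$ at $z_{T,F}$ and onto $F'$ at $z_{T,F'}$, with $|z_T - z_{T,F}| = |z_T - z_{T,F'}| = \rho_T/2$ (the inradius). Let $m$ be the foot of the perpendicular from $z_T$ to the line containing $e$; then the segments $z_{T,F}$--$m$ and $z_{T,F'}$--$m$ both lie perpendicular to $e$ (each within its face), so the planar angle $\angle(z_{T,F}, m, z_{T,F'})$ equals the dihedral angle $\alpha_e$, and the four points $z_T, z_{T,F}, m, z_{T,F'}$ are coplanar, lying in the plane through $z_T$ normal to $e$ — this is the configuration suggested by Figure \ref{figdihedral}. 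In that plane, $z_{T,F}$, $z_T$, and $m$ form a right triangle with the right angle at $z_{T,F}$ (since $z_T - z_{T,F}\perp F \ni e$-direction... more precisely $z_T-z_{T,F}$ is normal to $F$, hence perpendicular to the in-face segment $z_{T,F}$--$m$). The quantity I want, $\textnormal{dist}(z_{T,F}, \text{line}(e))$, is exactly $|z_{T,F} - m|$, one leg of this right triangle, with opposite leg $|z_T - z_{T,F}| = \rho_T/2$.

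The computation then reduces to a planar trigonometric identity. The angle of the right triangle at $m$ is half the dihedral angle: the ray $m$--$z_T$ bisects the angle $\angle(z_{T,F}, m, z_{T,F'}) = \alpha_e$ because the two right triangles $\triangle(m, z_{T,F}, z_T)$ and $\triangle(m, z_{T,F'}, z_T)$ are congruent (shared hypotenuse $m$--$z_T$, equal legs $\rho_T/2$). Hence $|z_{T,F} - m| = (\rho_T/2)\cot(\alpha_e/2)$, and the half-angle formula $\cot(\alpha_e/2) = \sqrt{(1+\cos\alpha_e)/(1-\cos\alpha_e)}$ gives exactly the bound claimed. One must check the angle $\alpha_e/2$ lies in $(0,\pi/2)$ so that $\cot$ is positive and the half-angle identity applies in this form; this holds since dihedral angles of a tetrahedron satisfy $\alpha_e\in(0,\pi)$. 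Taking the minimum over the three edges of $F$, and then over all faces, yields \eqref{equ:distFace}.

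The main obstacle is the geometric setup rather than the algebra: one must argue carefully that $m$ (the foot of the perpendicular from $z_T$ to $e$) is the same point whether computed within $F$ or within $F'$ — equivalently that $z_{T,F}$--$m$ realizes $\textnormal{dist}(z_{T,F},e)$ and lies in the plane normal to $e$ through $z_T$ — and that this plane contains all four relevant points so the dihedral angle genuinely appears as the planar angle at $m$. Once that coplanarity and the congruence of the two right sub-triangles are established, everything else is the half-angle identity. A minor point to record is that $\textnormal{dist}(z_{T,F},\partial F) = \min_{e\in\Delta_1(F)}\textnormal{dist}(z_{T,F},\text{line}(e))$ requires $z_{T,F}\in F$ (so that the nearest boundary point is on one of the edge lines, not an extension) — but $z_{T,F}\in S_T\cap F\subset F$ by definition of the inscribed sphere, so this is automatic.
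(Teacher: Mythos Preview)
Your proposal is correct and follows essentially the same route as the paper's proof. The only cosmetic difference is the order of definitions: you introduce $m$ as the foot of the perpendicular from $z_T$ to the line through $e$ and then argue that $z_{T,F}$, $z_{T,F'}$, $z_T$, $m$ are coplanar, whereas the paper first fixes the plane $\gamma$ through $z_T$, $z_{T,F}$, $z_{T,F'}$, shows $\ell\perp\gamma$ (since $\ell\perp[z_T,z_{T,F}]$ and $\ell\perp[z_T,z_{T,F'}]$), and takes $m=\ell\cap\gamma$; the resulting right-triangle congruence, angle bisection $\angle z_T m z_{T,F}=\alpha_e/2$, and the half-angle identity $\cot(\alpha_e/2)=\sqrt{(1+\cos\alpha_e)/(1-\cos\alpha_e)}$ are identical in both arguments.
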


\begin{proof}
\MJN{We use the short hand notation  depicted in 
Figure \ref{figdihedral}. In particular,  $z$ denotes the incenter of $T$ and  $F_i \in \Delta_2(T)$, $i=0,\ldots , 3$ denote the faces of $T$. Let $z_{i}$ be the orthogonal
projection of $z$ onto the plane containing $F_i$ and note that $|[z,z_i]|=\rho_T/2$.  We need to find a lower bound for ${\rm dist}(z_{k},\p F_{k})$ ($k=0, \dots, 3$) and without loss of generality we consider the case $k=0$.  To this end,  let $e_i= \partial {F_0} \cap \partial {F_i}$, $i=1,2,3$ and furthermore let $\ell_i$ be the line containing $e_i$.  Let $\gamma_i$ be the plane determined by the points $z$, $z_{0}$, and $z_{i}$ and let $v_i= \ell_i \cap \gamma_i$. Since $\ell_i \perp [z,z_i]$ for $j=0,i$, we have the line $\ell_i$ is perpendicular to the plane $\gamma_i$, and thus $\ell_i \perp [v_i,z_{j}]$ for $j=0,i$. This implies }
\[
\MJN{\textnormal{dist}(z_{j}, \ell_i) = |[z_{j},v_i]|,~ j=0,i,\ \text{ and }} \ 
    \MJN{\alpha_{e_i}:=\angle z_{0}v_iz_{i} = \angle z v_i z_{0}+\angle z v_i z_{i}.}
\]

\MJN{Next, note the properties 
$[z,z_{j}]\perp [z_{j},v_i]$ for $j=0,i$
and $|[z,z_{j}]| = \rho_{T}/2$ imply that 
the triangles $[z,v_i,z_{0}]$
and $[z,v_i,z_{i}]$ are congruent (see Figure \ref{dihedral2}).
Consequently, $\angle z v_i z_{0}=\angle z v_i z_{i}=\alpha_{e_i}/2$ and so}
\begin{equation}\label{equ:distEdge}
    \begin{split}
\MJN{\textnormal{dist}(z_{0}, \ell_i)  = |[z_{0},v_i]|
        = \frac{\rho_{T}/2}{\tan (\alpha_{e_i}/2)} = \frac{\rho_{T}}{2} \sqrt{\frac{1+\cos(\alpha_{e_i})}{1-\cos(\alpha_{e_i})}}.}
    \end{split}
\end{equation}
\MJN{The result now follows after using ${\rm dist}(z_{0},\p F_0) \ge \min\limits_{1\le i \le 3} {\rm dist}(z_{0},\ell_i)$.}
\end{proof}

\subsection{Worsey-Farin splits}\label{sec:WF splits}
\MJN{Let $\mathcal{T}_h$ be a three-dimensional  
triangulation without hanging nodes.}
We recall the construction of the Worsey-Farin refinement
of $\calT_h$ in the following definition \cite{worsey1987ann,lai2007spline,guzman2022exact}.

\begin{definition}\label{def:WF}
The Worsey-Farin refinement of $\calT_h$, denoted by $\mathcal{T}^{wf}_h$, is defined by the following two steps:
\begin{enumerate}
\item Connect the incenter $z_T$ of of each tetrahedron $T\in \calT_h$
 to its four vertices; 
\item For each interior face $F = \overline{T_1} \cap \overline{T_2}$ with $T_1$, $T_2 \in \calT_h$, let $m_F = L \cap F$ where $L = [{z_{T_1},z_{T_2}}]$, the line segment connecting 
 the incenter of $T_1$ and $T_2$; meanwhile, for a boundary face $F$ with $F = \overline{T}\cap \p\Omega$ with $T\in \calT_h$, let $m_F$ be the barycenter of $F$. We then connect $m_F$ to the three vertices of the face $F$ and to the incenters $z_{T_1}$ and $z_{T_2}$ (or $z_T$ for the boundary case). 
 \end{enumerate}
We see that this two-step procedure divides each $T\in \calT_h$ into $12$ subtetrahedra; we
denote the set of  these subtetrahedra by $T^{wf}$. 
\end{definition}
The result \cite[Lemma 16.24]{lai2007spline} ensures that the \MJN{three-dimensional} Worsey-Farin refinement is well-defined; in particular, the line segment
connecting the incenters of neighboring tetrahedra intersects their common 
face.

\begin{definition}\label{def:shapeR}
We define the shape regularity constant of the triangulation $\calT_h$ 
as
\[
c_0 = \max_{T\in \calT_h} \frac{h_T}{\rho_T}.
\]
\end{definition}

It is well-known that shape regularity of a mesh leads to bounded dihedral angles. 
To be self-contained, we present a proof here.
\begin{figure} 
        \centering
        \includegraphics[width=0.5\textwidth]{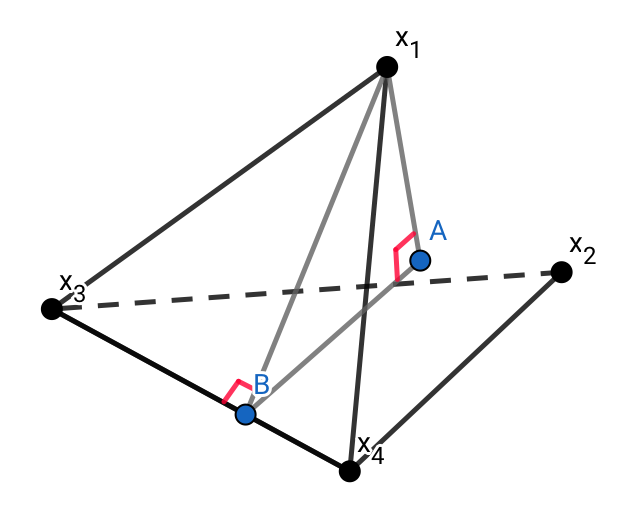}
        \caption{Computing dihedral angles.}
        \label{fig:dihedral}
\end{figure}

\begin{lemma} \label{lem:cos}
\MJN{Fix $T\in \calT_h$, and let $\alpha_e$ denote
the dihedral angle of $T$ with respect to $e\in \Delta_1(T)$.}
We then have 
\begin{equation} \label{equ:cos}
    |\cos (\alpha_e)| \le \sqrt{1-\textcolor{black}{c_0^{-2}}}\qquad \forall e\in \Delta_1(T).
\end{equation}
\end{lemma}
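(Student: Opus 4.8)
The plan is to bound $|\cos(\alpha_e)|$ by relating the dihedral angle at an edge $e$ to the ratio of an area to a volume, and then invoke Proposition~\ref{prop:rhoK} and the definition of $c_0$. Fix an edge $e \in \Delta_1(T)$ and let $F_1, F_2$ be the two faces of $T$ sharing $e$, with outward unit normals $\bn_1, \bn_2$. Then $\cos(\alpha_e) = -\bn_1 \cdot \bn_2$, so I need a lower bound on $|\bn_1 \cdot \bn_2|$ bounded away from $1$, equivalently an upper bound on $\sin(\alpha_e)$... no — I want $|\cos(\alpha_e)|$ bounded \emph{above} by something less than $1$, which means $\sin(\alpha_e)$ is bounded \emph{below} away from $0$. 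So the real goal is a lower bound $\sin(\alpha_e) \ge c_0^{-1}$.

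First I would pick a vertex $x$ of $T$ lying on $e$, and consider a second vertex, say $y$, also on $e$ (so $e = [x,y]$), together with the remaining two vertices, one in $F_1$ and one in $F_2$. The standard device is to drop perpendiculars to the line $\ell$ containing $e$ from the two vertices not on $e$: call these feet $p_1 \in F_1$, $p_2 \in F_2$, so that the planar angle $\angle \,(\cdot)\, p_i \,(\cdot)$ realizes the dihedral angle $\alpha_e$ in the plane through $p_i$ perpendicular to $\ell$ (this is exactly the construction used in Lemma~\ref{lem:zT2angle} and depicted in Figure~\ref{fig:dihedral}). Then I would compute $|T|$ two ways: once as $\tfrac13 |F_j|\,\mathrm{dist}(x_j, P_j)$ using Proposition~\ref{prop:height}-type reasoning, where $x_j$ is the vertex opposite $F_j$, and once by a cross-product/base-height formula involving the length $|e|$, the two ``heights'' $\mathrm{dist}(v_1,\ell)$ and $\mathrm{dist}(v_2,\ell)$, and the sine of the dihedral angle: concretely $|T| = \tfrac16 |e|\cdot \mathrm{dist}(v_1,\ell)\cdot\mathrm{dist}(v_2,\ell)\cdot \sin(\alpha_e)$, where $v_1, v_2$ are the vertices of $T$ not on $e$. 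Solving for $\sin(\alpha_e)$ and bounding the geometric quantities: $|e| \le h_T$, $\mathrm{dist}(v_j,\ell) \le h_T$, and $|T| \ge$ (something like $\tfrac13 \rho_T \cdot$ a face area) — combined with $|F| \le$ a constant times $h_T^2$ — should produce $\sin(\alpha_e) \ge \rho_T/h_T \ge c_0^{-1}$, possibly up to absorbing harmless constants. Then $|\cos(\alpha_e)| = \sqrt{1-\sin^2(\alpha_e)} \le \sqrt{1 - c_0^{-2}}$, which is \eqref{equ:cos}.

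The cleanest route, which I would actually pursue, avoids juggling several inequalities: use $\sin(\alpha_e) = \mathrm{dist}(v_1, P_1)/\mathrm{dist}(v_1, \ell)$, where $P_1$ is the plane of $F_1$ and $v_1$ is the vertex of $T$ not on $F_1$ and not the endpoint whose perpendicular foot we use — i.e., realize the dihedral angle inside the right triangle with legs along $P_1$ and along the common perpendicular direction. Since $\mathrm{dist}(v_1, P_1) > \rho_T$ by \eqref{313} of Proposition~\ref{prop:height} (the vertex opposite a face is farther than $\rho_T$ from that face's plane), and $\mathrm{dist}(v_1,\ell) \le h_T$ trivially (both points lie in $T$, whose diameter is $h_T$), we get directly $\sin(\alpha_e) > \rho_T/h_T \ge 1/c_0$, hence $|\cos(\alpha_e)| = \sqrt{1-\sin^2\alpha_e} < \sqrt{1-c_0^{-2}}$.

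The main obstacle is purely bookkeeping: making sure the point whose distance to $P_1$ I measure is genuinely a vertex of $T$ opposite $F_1$ (so that Proposition~\ref{prop:height} applies and gives the clean $>\rho_T$ bound), while simultaneously that same point, together with an endpoint of $e$, gives a segment realizing the dihedral angle via a perpendicular dropped to $\ell$. One must verify that the foot of the perpendicular from $v_1$ to $\ell$ lies so that the relevant right-triangle identity $\sin(\alpha_e) = \mathrm{dist}(v_1,P_1)/\mathrm{dist}(v_1,\ell)$ holds — this is the elementary fact that the sine of a dihedral angle equals the ratio of the distance of a point (on one face, off the edge) to the opposite face's plane, over its distance to the edge line. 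Once this identity is stated carefully (it is essentially the right-triangle relation in the plane through $v_1$ perpendicular to $\ell$), the rest is immediate from the two cited propositions and Definition~\ref{def:shapeR}.
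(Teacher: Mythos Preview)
Your ``cleanest route'' is exactly the paper's proof: pick the vertex $v_1$ opposite one of the two faces $F_1$ meeting at $e$, drop perpendiculars from $v_1$ to the plane $P_1$ of $F_1$ (foot $A$) and to the line $\ell$ through $e$ (foot $B$), observe that $\sin(\alpha_e)=|[v_1,A]|/|[v_1,B]|$, and bound the numerator below by $\rho_T$ via Proposition~\ref{prop:height} and the denominator above by $h_T$. Your bookkeeping worry is handled just as you suspect---since $[v_1,A]\perp\ell$ and $[v_1,B]\perp\ell$, the segment $[A,B]$ lies in $P_1$ and is perpendicular to $\ell$, so the right triangle $[v_1,A,B]$ realizes the dihedral angle at $B$; the paper states this in one line.
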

\begin{proof}
Write $T = [x_1,x_2,x_3,x_4]$,
consider the edge $e=[x_3,x_4]$, and let $\ell$ be the line containing $e$; see Figure \ref{fig:dihedral}.
Let $A$ be the orthogonal projection of \MJN{$x_1$}
onto the plane $\gamma$ containing the face $[x_2,x_3,x_4]$,
and let $B$ be the point on $\ell$
such that $[x_1,B]\perp \ell$. 
\MJN{Note that $[x_1,B]\perp\ell$ and $[x_1,A]\perp \ell$ implies $[A,B]\perp \ell$.}
Since $|[x_1,A]| \ge \rho_T$ by Proposition \ref{prop:height} and $|[x_1,B]| \le h_T$, the dihedral angle of $e$
satisfies
\[
\sin (\alpha_{e}) = \frac{|[x_1,A]|}{|[x_1,B]|} \ge \textcolor{black}{\frac{\rho_T}{h_T} \ge c_0^{-1}}.
\]
Therefore, we have $|\cos (\alpha_e)|=\sqrt{1-\sin^2 (\alpha_e)} \le \sqrt{1-\textcolor{black}{c_0^{-2}}}$.
\end{proof}

\section{Analysis of the shape regularity of Worsey-Farin splits}\label{sec:analysis}

In this section, we prove the main result of this note. We prove that the Worsey-Farin refinement
$\calT_h^{wf}$ is shape regular provided the parent triangulationn 
$\calT_h$ is shape regular. To be more precise, the following theorem will be proved:
\begin{thm} \label{thm:WFshape}
There exists a constant $c_1 >0$ only depending on $c_0$, the 
shape regularity constant of $\calT_h$ given in Definition \ref{def:shapeR} such that 
\[
\max\limits_{K \in \calT^{wf}_h} \frac{h_K}{\rho_K} \le c_1.
\]
\end{thm}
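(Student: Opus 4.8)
The plan is to show that each subtetrahedron $K\in T^{wf}$ of each $T\in\calT_h$ satisfies $h_K\le h_T$ and $\rho_K\ge c\,h_T$ for some $c=c(c_0)>0$; then $h_K/\rho_K\le 1/c=:c_1$. By Definition \ref{def:WF}, every such $K$ has the form $K=[z_T,m_F,x_i,x_j]$, where $F\in\Delta_2(T)$, $[x_i,x_j]\in\Delta_1(F)$, and $m_F$ is the split point of $F$, an interior point of $F$. Since $m_F,x_i,x_j$ lie in the plane $P_F$ containing $F$,
\[
|K|=\tfrac13\,|[m_F,x_i,x_j]|\cdot\textnormal{dist}(z_T,P_F)=\tfrac16\,|[x_i,x_j]|\cdot\textnormal{dist}(m_F,\ell_{ij})\cdot\textnormal{dist}(z_T,P_F),
\]
where $\ell_{ij}$ is the line through $x_i,x_j$. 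Each face of $K$ has area at most $h_K^2\le h_T^2$, so by Proposition \ref{prop:rhoK}, $\rho_K\ge\tfrac32|K|/h_T^2$. Hence the theorem reduces to the bound $|K|\ge c\,h_T^3$, i.e.\ to bounding each of the three factors above below by a fixed multiple of $h_T$.

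Two of the three factors come from routine shape-regularity bookkeeping, which I would record first. Definition \ref{def:shapeR} gives $\rho_T\ge h_T/c_0$. Proposition \ref{prop:height}, applied at a vertex of $[x_i,x_j]$, shows every edge of $T$ has length exceeding $\rho_T$, so $|[x_i,x_j]|>h_T/c_0$. Since $z_{T,F}$ is the orthogonal projection of the incenter onto $P_F$ at distance equal to the inradius, $\textnormal{dist}(z_T,P_F)=\rho_T/2\ge h_T/(2c_0)$. I would also note: the inscribed ball gives $|T|\ge\tfrac{\pi}{6}\rho_T^3$, hence (writing $|T|=\tfrac13|F|\,\textnormal{dist}(x,P_F)$ with $x$ the opposite vertex) $|F|\ge\tfrac{\pi}{2c_0^3}h_T^2$ for every $F\in\Delta_2(T)$; and two tetrahedra sharing a face $F$ have $c_0$-comparable diameters, since every edge of $F$ has length in $(\rho_{T_i},h_{T_i}]$, which traps $\textnormal{diam}(F)$ between $\rho_{T_i}$ and $h_{T_i}$ for $i=1,2$.

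The crux — and the step I expect to be the main obstacle — is the lower bound $\textnormal{dist}(m_F,\ell_{ij})\ge c\,h_T$; equivalently, that split points stay uniformly away from the boundaries of their faces. (In the write-up this is the three-dimensional analogue of the Powell--Sabin estimate cited in the introduction and would be stated as a separate lemma.) For a boundary face, $m_F$ is the barycenter of $F=[x_i,x_j,x_k]$, so $\textnormal{dist}(m_F,\ell_{ij})=\tfrac13\textnormal{dist}(x_k,\ell_{ij})=\tfrac{2|F|}{3|[x_i,x_j]|}\ge\tfrac{\pi}{3c_0^3}h_T$, which settles that case. For an interior face $F=\overline{T_1}\cap\overline{T_2}$, I would use that $z_{T_1}$ and $z_{T_2}$ lie strictly on opposite sides of $P_F$, so the point $m_F=[z_{T_1},z_{T_2}]\cap P_F$ is the convex combination $\tfrac{d_2z_{T_1,F}+d_1z_{T_2,F}}{d_1+d_2}$ of the touching points $z_{T_r,F}\in\textnormal{int}(F)$, with $d_r=\textnormal{dist}(z_{T_r},P_F)>0$. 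Because the signed distance to $\ell_{ij}$ inside $P_F$ is affine and $z_{T_1,F},z_{T_2,F}$ lie on the same (namely, the $F$-) side of $\ell_{ij}$, this yields $\textnormal{dist}(m_F,\ell_{ij})\ge\min_r\textnormal{dist}(z_{T_r,F},\ell_{ij})$. Now equation \eqref{equ:distEdge}, from the proof of Lemma \ref{lem:zT2angle}, identifies $\textnormal{dist}(z_{T_r,F},\ell_{ij})=\tfrac{\rho_{T_r}}{2}\sqrt{\tfrac{1+\cos\alpha}{1-\cos\alpha}}$ with $\alpha$ the dihedral angle of $T_r$ at $[x_i,x_j]$; Lemma \ref{lem:cos} bounds $\tfrac{1+\cos\alpha}{1-\cos\alpha}\ge\tfrac{1-s}{1+s}$ with $s=\sqrt{1-c_0^{-2}}<1$, while $\rho_{T_r}\ge h_{T_r}/c_0\ge h_T/c_0^2$ by the comparability above. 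Altogether $\textnormal{dist}(m_F,\ell_{ij})\ge\tfrac{h_T}{2c_0^2}\sqrt{\tfrac{1-s}{1+s}}$.

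Combining the three lower bounds gives $|K|\ge c\,h_T^3$ with $c>0$ depending only on $c_0$, whence $\rho_K\ge\tfrac32c\,h_T\ge\tfrac32c\,h_K$, and therefore $h_K/\rho_K\le\tfrac{2}{3c}=:c_1$, proving Theorem \ref{thm:WFshape}. The only genuinely new ingredient is the interior-face estimate of the previous paragraph — controlling $m_F$ through the two incenter projections; the rest is elementary geometry combined with Lemmas \ref{lem:zT2angle} and \ref{lem:cos}, and one would simply propagate the explicit constants to exhibit $c_1$ in closed form.
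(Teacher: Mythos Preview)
Your proposal is correct and follows essentially the same route as the paper: write $|K|$ as the product of $|[x_i,x_j]|$, $\textnormal{dist}(m_F,\ell_{ij})$, and $\textnormal{dist}(z_T,P_F)$, bound the first and third factors trivially via $\rho_T\ge h_T/c_0$, and isolate the crux as a separate lemma showing $\textnormal{dist}(m_F,\partial F)\gtrsim h_T$ by combining the convex-combination representation of $m_F$ (this is the paper's Proposition~\ref{prop:mF2zT}), identity~\eqref{equ:distEdge}, and Lemma~\ref{lem:cos}; then finish with Proposition~\ref{prop:rhoK}. The only differences are bookkeeping: you bound face areas by $h_K^2$ rather than $\pi h_K^2/4$, and you pass from $\rho_{T_r}$ to $h_T$ via the neighbor comparability $h_{T_r}\ge h_T/c_0$ instead of the paper's $\min_{T'} h_{T'}\ge |e|>\rho_T$, which yields slightly different but equivalent explicit constants.
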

\MJN{For an explicit formula of $c_1$,} see \eqref{eqn:c1Def} and \eqref{equ:c2}.

\subsection{Local geometry}

\begin{figure}
      \centering
      \begin{subfigure}[b]{0.45\textwidth}
          \centering
          \includegraphics[width=\textwidth]{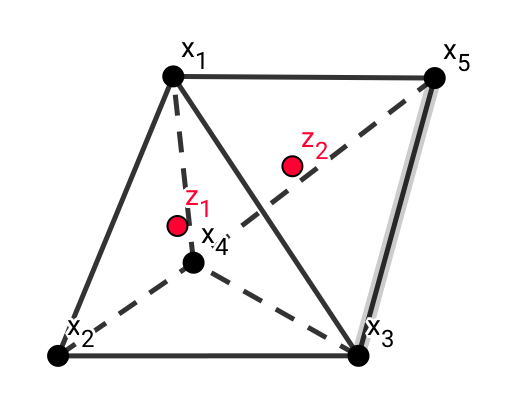}
          \caption{Two adjacent elements of the mesh.}
          \label{macro}
      \end{subfigure}
      \hfill
      \begin{subfigure}[b]{0.45\textwidth}
          \centering
          \includegraphics[width=\textwidth]{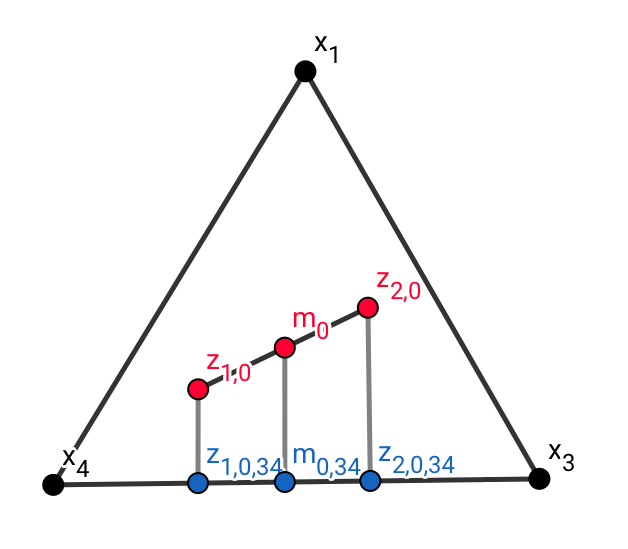}
          \caption{The common face}
          \label{face}
      \end{subfigure}
      \label{figmain}
      \caption{A representation of the Worsey-Farin splits.}
 \end{figure}
To prove the above theorem, we need to consider two cases: interior and boundary faces of $\calT_h$. 
The case of boundary faces is simpler, so we first focus on the interior faces. For that case, it is sufficient to consider two adjacent elements of the mesh $\calT_h$. 
To this end, let $T_1,T_2\in \calT_h$ be \MJN{two tetrahedra}
that share a common face $F_0$.
We write 
$T_1 = [x_1,x_2,x_3,x_4]$, $T_2 = 
[x_1,x_3,x_4,x_5]$, so that the common face is $F_0 = [x_1,x_3,x_4]$.
We further set $F_1= [x_2,x_3,x_4]$, 
and let $z_i$ be the incenter of $T_i$, $i=1,2$ (see Figure \ref{macro}).
For $i=1,2$, we denote by $z_{i,0}$ the orthogonal projections 
of $z_i$ onto the plane containing the face $F_0$ (see Figure \ref{face}).
Likewise the orthogonal projection of $z_1$ onto the 
plane containing the face $F_1$ 
is denoted by $z_{1,1}$ (see Figure \ref{dihedral1}). 
We denote the split point of
the face $F_0$ by $m_0$, i.e.,
$m_0$ is the intersection of 
the line $[z_1,z_2]$ and $F_0$.

\subsection{The position of split points and bounded dihedral angles}

The following proposition shows the relation between the split point $m_0$ and the projections $z_{i,0}$, $i=1,2$ of \MJN{the} incenter on the face $F_0$. 
\begin{prop} \label{prop:mF2zT}
The orthogonal projections $z_{i,0}\ (i=1,2)$ lie in the interior of $F_0$,
and the split point $m_0$ lies on the line segment $[z_{1,0}, z_{2,0}]$. Furthermore, we have
\[
{\rm dist}(m_0,\p F_0) \ge \min\limits_{i=1,2} {\rm dist}(z_{i,0}, \p F_0).
\]
\end{prop}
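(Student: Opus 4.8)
The plan is to establish the three assertions in turn, the key observation being that $z_{i,0}$ is nothing but the point where the inscribed sphere $S_{T_i}$ touches the face $F_0$. Indeed, by the characterization of $z_{T,F}$ recalled at the start of Section~\ref{sec:Prelim}, the vector $z_i-z_{i,0}$ is normal to $F_0$, so $z_{i,0}=z_{T_i,F_0}$; Lemma~\ref{lem:zT2angle} applied to $T_i$ then gives $\textnormal{dist}(z_{i,0},\p F_0)>0$, i.e. $z_{i,0}\in\textnormal{int}(F_0)$. (Alternatively, one can argue directly: if $z_{i,0}$ lay on $\p F_0$ it would also lie on the plane $P'$ of a second face $F'$ of $T_i$; since $S_{T_i}$ is tangent to $P'$ and $z_{i,0}\in S_{T_i}$, we would get $z_{i,0}=z_{T_i,F'}$, forcing the line through $z_i$ and $z_{i,0}$ to be normal both to $F_0$ and to $F'$, which is impossible for two distinct faces sharing an edge.)

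For the second assertion, I would use that $T_1$ and $T_2$ lie on opposite sides of the plane $P_0$ containing $F_0$ — this is where the validity of the triangulation (no hanging nodes) enters — so the incenters $z_1,z_2$ lie in the two open half-spaces cut out by $P_0$, and the segment $[z_1,z_2]$ crosses $P_0$ at a single point $m_0=(1-t)z_1+tz_2$ with $t\in(0,1)$. Letting $\pi$ be the orthogonal projection onto $P_0$, the map $\pi$ is affine, fixes $m_0$ (since $m_0\in P_0$), and sends $z_i$ to $z_{i,0}$ by definition; hence $m_0=\pi(m_0)=(1-t)z_{1,0}+tz_{2,0}\in[z_{1,0},z_{2,0}]$.

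For the distance bound, set $\delta=\min_{i=1,2}\textnormal{dist}(z_{i,0},\p F_0)$. For any vector $v$ lying in the plane $P_0$ with $|v|\le\delta$ one has $z_{i,0}+v\in F_0$ for $i=1,2$, and therefore $m_0+v=(1-t)(z_{1,0}+v)+t(z_{2,0}+v)\in F_0$ by convexity of the triangle $F_0$. Thus the $\delta$-disk of $P_0$ centered at $m_0$ lies in $F_0$, which is precisely the assertion $\textnormal{dist}(m_0,\p F_0)\ge\delta$.

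I do not anticipate a genuine obstacle: steps two and three are elementary convexity and affine-map arguments. The only point demanding a little care is the first step — confirming that the $z_{i,0}$ are the sphere-contact points and hence interior to $F_0$ — and this is handled either by invoking Lemma~\ref{lem:zT2angle} or by the short tangency argument sketched above.
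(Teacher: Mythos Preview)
Your proposal is correct. For the first two assertions the paper simply cites \cite[Lemma~16.24]{lai2007spline}, whereas you supply self-contained arguments (identifying $z_{i,0}$ as the sphere contact point and invoking Lemma~\ref{lem:zT2angle}, then using affinity of the orthogonal projection). For the distance inequality both proofs rest on convexity but phrase it differently: the paper observes that ${\rm dist}(\cdot,\ell_i)$ is affine on the side of $\ell_i$ containing $F_0$, so ${\rm dist}(m_0,\ell_i)=\theta\,{\rm dist}(z_{1,0},\ell_i)+(1-\theta)\,{\rm dist}(z_{2,0},\ell_i)$ for each edge-line $\ell_i$, and then takes the minimum over $i$; you instead note that the $\delta$-disks about $z_{1,0}$ and $z_{2,0}$ lie in $F_0$ and that the Minkowski sum of a segment and a disk stays in the convex set $F_0$. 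The paper's computation yields the slightly stronger intermediate bound ${\rm dist}(m_0,\p F_0)\ge\theta\,{\rm dist}(z_{1,0},\p F_0)+(1-\theta)\,{\rm dist}(z_{2,0},\p F_0)$, but for the stated proposition the two arguments are equivalent.
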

\begin{proof}
The proof of \cite[Lemma 16.24]{lai2007spline} shows that  $m_0$ lies on the line segment $[z_{1,0}, z_{2,0}]$ and that $z_{i,0}$ ($i=1,2$) lie in the interior of \MJN{$F_0$}. 

Let $\ell_i$, $i=1,2,3$ denote the lines that contain the 
three edges of $F_0$.
Because $m_0$ lies on the interior of the line segment  $[z_{1,0}, z_{2,0}]$,
there exists a constant $\theta \in (0,1)$ such that $m_0 = \theta z_{1,0} +(1-\theta) z_{2,0}$. 
Then by constructing similar triangles, we have
\begin{align*}
{\rm dist}(m_0,\p F_0) 
&= \min\limits_{1\le i \le 3} \textnormal{dist}(m_0, \ell_i)\\
&= \min\limits_{1\le i \le 3} \Big(
\theta  \textnormal{dist}(z_{1,0}, \ell_i)+(1-\theta)\textnormal{dist}(z_{2,0}, \ell_i)\Big)\\
&\ge \theta \min\limits_{1\le i \le 3}  \textnormal{dist}(z_{1,0}, \ell_i)+(1-\theta)\min_{1\le i\le 3}\textnormal{dist}(z_{2,0}, \ell_i)\\
&= \theta  \textnormal{dist}(z_{1,0}, \p F_0)+(1-\theta)\textnormal{dist}(z_{2,0}, \p F_0)\\
&\ge \min_{i=1,2}{\rm dist}(z_{i,0},\p F_0).
\end{align*}
\end{proof}



Combining \MJN{Lemma \ref{lem:zT2angle}, Lemma \ref{lem:cos}  and Proposition \ref{prop:mF2zT}}, we have the following lemma which describes the position of split points. We also include the case for boundary faces.
\begin{lemma} \label{lem:splitF}
\MJN{Recall that $m_F$ is the split point of $F$ constructed
by the Worsey-Farin split defined in Definition \ref{def:WF}.}
For any face $F$ of  $\calT_h$,  
\begin{equation*}
  {\rm dist}(m_F,\p F)\ge c_2 \mathop{\min_{T \in \calT_h}}_{F \in \Delta_2(T)} h_T,   
\end{equation*}
where
\begin{equation}\label{equ:c2}
c_2 := \min\{\mathfrak{c}_2, (3c_0)^{-1}\}, \quad  \mathfrak{c}_2:=(2c_0)^{-1}\sqrt{-1+\frac{2}{1+\sqrt{1-c_0^{-2}}}}.
\end{equation}

\end{lemma}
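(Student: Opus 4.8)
The plan is to split into the interior-face case and the boundary-face case, handling them separately. For an interior face $F = F_0 = \overline{T_1}\cap\overline{T_2}$, I would chain together the three earlier results. First, Proposition \ref{prop:mF2zT} reduces bounding $\mathrm{dist}(m_0,\p F_0)$ from below to bounding $\mathrm{dist}(z_{i,0},\p F_0)$ for $i=1,2$, since $m_0$ lies on $[z_{1,0},z_{2,0}]$ and the distance to $\p F_0$ is concave-like along that segment. Next, apply Lemma \ref{lem:zT2angle} to each $T_i$: this gives
\[
\mathrm{dist}(z_{i,0},\p F_0)\ \ge\ \min_{e\in\Delta_1(T_i)}\frac{\rho_{T_i}}{2}\sqrt{\frac{1+\cos(\alpha_e)}{1-\cos(\alpha_e)}},
\]
where one must note that $z_{i,0}$ is exactly the point called $z_{0}$ in the proof of that lemma (the projection of the incenter onto the face $F_0$), so the lemma's bound for that particular face applies. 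Then I would use Lemma \ref{lem:cos} to control the dihedral-angle factor: since $t\mapsto \sqrt{(1+t)/(1-t)}$ is increasing in $t$, and $\cos(\alpha_e)\in[-\sqrt{1-c_0^{-2}},\sqrt{1-c_0^{-2}}]$, the worst case is $\cos(\alpha_e) = -\sqrt{1-c_0^{-2}}$, giving the factor $\sqrt{(1-\sqrt{1-c_0^{-2}})/(1+\sqrt{1-c_0^{-2}})}$. A short algebraic simplification (rationalizing, or writing $\tfrac{1-s}{1+s}$ with $s=\sqrt{1-c_0^{-2}}$ as $-1 + \tfrac{2}{1+s}$) converts this into exactly the constant $\mathfrak{c}_2$ appearing in \eqref{equ:c2}, after also using $\rho_{T_i}\ge h_{T_i} c_0^{-1}$... wait, more carefully: $\rho_{T_i} = 2\cdot(\text{inradius})$ and $h_{T_i}/\rho_{T_i}\le c_0$ gives $\rho_{T_i}\ge h_{T_i}/c_0$, so $\tfrac{\rho_{T_i}}{2}\ge \tfrac{h_{T_i}}{2c_0}$. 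Combining, $\mathrm{dist}(z_{i,0},\p F_0)\ge \mathfrak{c}_2\, h_{T_i}$, and taking the min over $i$ and invoking Proposition \ref{prop:mF2zT} yields $\mathrm{dist}(m_0,\p F_0)\ge \mathfrak{c}_2\min_{i}h_{T_i}$.

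For a boundary face $F = \overline{T}\cap\p\Omega$, the split point $m_F$ is the barycenter of $F$, so $\mathrm{dist}(m_F,\p F)$ is controlled directly: the barycenter of a triangle is at distance at least $\tfrac13$ of each altitude from the opposite edge, hence $\mathrm{dist}(m_F,\p F)\ge \tfrac13 h_F^{\min}$ where $h_F^{\min}$ is the smallest altitude of $F$. I would then relate the smallest altitude of the face $F$ to $\rho_T$ (e.g. via $|F|\ge \tfrac12 h_F^{\min}\cdot(\text{edge})$ and $\rho_T \le$ small-altitude-type bounds, or more simply: the inscribed circle of $F$ has radius at least $\rho_T/2$ since $S_T\cap F$ is a disk... actually the cleanest route is to bound the inradius of the face below by the inradius of $T$ — the intersection of $S_T$ with the plane of $F$ is a disk of radius $\ge$ something), concluding $\mathrm{dist}(m_F,\p F)\ge (3c_0)^{-1}h_T$ after using $\rho_T\ge h_T/c_0$. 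Taking $c_2 = \min\{\mathfrak{c}_2,(3c_0)^{-1}\}$ covers both cases.

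The main obstacle I anticipate is the boundary-face argument: getting a clean lower bound on $\mathrm{dist}(m_F,\p F)$ in terms of $h_T$ requires relating the in-plane geometry of the triangle $F$ (its inradius or smallest altitude) to $\rho_T$, and the factor $(3c_0)^{-1}$ suggests the authors use a fairly direct estimate — likely that the incircle of $F$ has radius at least $\rho_T/2$ (since the ball $S_T$ meets the plane of $F$ in a disk contained in $F$ of diameter... hmm, tangentially it meets at just the point $z_{T,F}$, so that doesn't immediately work) or instead that $z_{T,F}$ itself is at distance $\ge\rho_T/\text{something}$ from $\p F$ via Lemma \ref{lem:zT2angle} combined with $\rho_T\ge h_T/c_0$, and then noting the barycenter can't be too far from a deep interior point. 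I would need to settle which of these gives precisely $(3c_0)^{-1}$; the secondary obstacle is making sure the min over tetrahedra containing $F$ (one or two of them) is tracked correctly so the final bound reads $c_2\min_{T\ni F}h_T$ uniformly in both cases.
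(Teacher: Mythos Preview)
Your interior-face argument is correct and coincides with the paper's: chain Proposition~\ref{prop:mF2zT}, Lemma~\ref{lem:zT2angle}, and Lemma~\ref{lem:cos}, take the worst case $\cos(\alpha_e)=-\sqrt{1-c_0^{-2}}$, and simplify via $\tfrac{1-s}{1+s}=-1+\tfrac{2}{1+s}$ together with $\rho_{T_i}\ge h_{T_i}/c_0$ to land on $\mathfrak{c}_2$.

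The boundary-face case has a genuine gap. You correctly reduce to showing that the smallest altitude of the triangle $F$ is at least $\rho_T$, but none of your three candidate routes closes this: the inscribed sphere $S_T$ meets the plane of $F$ only tangentially (so there is no disk-in-$F$ argument), and bounding $\textnormal{dist}(z_{T,F},\p F)$ via Lemma~\ref{lem:zT2angle} says nothing about the barycenter, which is a different point. The missing observation, used in the paper, is this: for any vertex $x$ of $F$, the opposite edge $e$ of $F$ lies in the face of $T$ opposite $x$ (that face contains every vertex of $T$ except $x$, hence both endpoints of $e$). Consequently the line $\ell\supset e$ lies in the plane $P_x$ opposite $x$, and the altitude of $F$ from $x$ satisfies
\[
\textnormal{dist}(x,\ell)\ \ge\ \textnormal{dist}(x,P_x)\ >\ \rho_T
\]
by Proposition~\ref{prop:height}. (Equivalently, in the paper's notation from Figure~\ref{fig:dihedral}, $[x_1,A,B]$ is a right triangle with the right angle at $A$, so $|[x_1,B]|\ge |[x_1,A]|>\rho_T$.) This gives every altitude of $F$ strictly greater than $\rho_T$, hence $\textnormal{dist}(m_F,\p F)\ge \tfrac13\rho_T\ge (3c_0)^{-1}h_T$, which is exactly the constant you were trying to hit.
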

\begin{proof}
(i) \textbf{$F$ is an interior face.}  In this case $F \in \Delta_2(T)$ and $F \in \Delta_2(T')$ 
for some $T,T'\in \calT_h$. Without loss of generality, we assume $\textnormal{dist}(z_{T',F},  \p F) \ge \textnormal{dist}(z_{T,F},  \p F)$.  
\MJN{Lemma \ref{lem:zT2angle} and Proposition \ref{prop:mF2zT}} tell us that
\begin{align*}
\textnormal{dist}(m_F,  \p F) 
& \ge \textnormal{dist}(z_{T,F},  \p F) \\
& \ge 
\min\limits_{e \in \Delta_1(T)} \frac{\rho_T}{2} \sqrt{\frac{1+\cos(\alpha_e)}{1-\cos(\alpha_e)}}
= \min_{e\in \Delta_1(T)} \frac{\rho_T}2 \sqrt{-1+\frac{2}{1-\cos(\alpha_e)}}.
\end{align*}
If $\cos(\alpha_e)\ge 0$, then $\frac2{1-\cos(\alpha_e)}\ge 2$,
and if $\cos(\alpha_e)\le 0$, then $\frac2{1-\cos(\alpha_e)} = \frac2{1+|\cos(\alpha_e)|}\ge \frac2{1+\sqrt{1-c_0^{-2}}}$ 
by Lemma \ref{lem:cos}.
Consequently,
\[
\textnormal{dist}(m_F,  \p F) \ge 
\min_{e\in \Delta_1(T)}\frac{\rho_T}{2} \sqrt{-1+\frac{2}{1-\cos(\alpha_e)}} \ge \mathfrak{c}_2h_T.
\]

(ii) \textbf{$F$ is a boundary face.} 
Let \MJN{$T=[x_1, x_2, x_3, x_4]$} and  $F=[x_1,x_3,x_4]$, \MJN{and consider an arbitrary $e \in \Delta_1(F)$ with $\ell$ denoting the line containing $e$}. \MJN{Without loss of generality we assume $e= [x_3,x_4]$ and} adopt the notation in the proof of Lemma \ref{lem:cos}; see Figure \ref{fig:dihedral}.  
Because $m_F$ is the barycenter of $F$, we have 
\[ \frac{1}{3}|F|=\frac{1}{2} \textnormal{dist}(m_F, \ell)|e|.
\]
Moreover, clearly
\[
|F|=\frac{1}{2} |e||[x_1,B].
\]
And therefore, since $|[x_1,B]| \ge |[x_1,A]| > \rho_T$, (where we used \eqref{313} and the right triangle $[x_1,A,B]$) we get
\[ \textnormal{dist}(m_F, \ell) = \frac{1}{3}|[x_1,B]| \ge \frac{1}{3}\rho_T \ge  (3c_0)^{-1} h_T. 
\]
\MJN{Since $e \in \Delta_1(F)$ was arbitrary the result follows.}
\end{proof}

\subsection{Proof of Theorem \ref{thm:WFshape}}
Now we are ready to use Lemma \ref{lem:splitF} to prove Theorem \ref{thm:WFshape}.
\begin{proof}[Proof of Theorem \ref{thm:WFshape}.]
Let $K\in \calT^{wf}_h$, and
let $T\in \calT_h$ such that $K\in T^{wf}$.
We write $T = [x_1,x_2,x_3,x_4]$,
and assume, without loss of generality,
that $e:=[x_1,x_2]$ is an edge
of both $T$ and $K$.
Let $F\in \Delta_2(T)$ such that
the split point $m_F$ is a vertex of $K$.
In particular, $e \in \Delta_1(F)$ and $K=[x_1,x_2,m_F,z_T]$, where $z_T$
is the incenter of $T$.  We futher denote
by $\ell$, the line containing the edge $e$.

We again adopt the notation in the proof
of Lemma \ref{lem:cos} and refer to Figure \ref{fig:dihedral}.
Note that $[x_1,A]$ is normal to the plane $\gamma$ containing $[x_2,x_3,x_4]$, 
in particular, $[x_1,A] \perp [A,x_2]$.
Thus $|e|=|[x_1,x_2]|>|[x_1,A]|>\rho_T$ by \eqref{313}.
Now we have $h_K \le h_T$, $\rho_T< |e| \le h_T$ and,  by Lemma \ref{lem:splitF}, 
the volume $K$ is
\begin{equation}\label{equ:volK}
\begin{split}
    |K| &= \frac{1}{3}\frac{\rho_T}2 \times |[x_1,x_2,m_F]| 
    =\frac{1}{12} \rho_T |e| \textnormal{dist}(m_F, \ell)\\
    &\ge \frac{1}{12} \rho_T^2 \textnormal{dist}(m_F,  \p F)
    \ge \frac{c_2}{12} \rho_T^2 \big( \mathop{\min_{T' \in \calT_h}}_{F \in \Delta_2(T')}  h_{T'}\big) \ge  \frac{c_2}{12} \rho_T^3 \ge \frac{c_2}{12 c_0^3} h_T^3.
    \end{split}
\end{equation}
Here we also used 
\begin{equation*}
\mathop{\min_{T' \in \calT_h}}_{F \in \Delta_2(T')} h_{T'} \ge |e| > \rho_T.
\end{equation*}
Additionally, each face of $K$ is contained in a circle with radius $h_K/2$, and thus we have 
\begin{equation}\label{equ:areaF}
\sum_{F\in \Delta_2(K)} |F|\le \sum_{F\in \Delta_2(K)} \frac{\pi h_K^2}{4} = \pi h_K^2.
\end{equation}
Consequently, with Proposition \ref{prop:rhoK}, \eqref{equ:volK} and \eqref{equ:areaF}, we have
\begin{equation*}
\begin{split}
    \rho_K & = \frac{6|K|}{\sum\limits_{F \in \Delta_2(K)} |F|} \ge \frac{c_2 h_T^3 }{2 \pi \textcolor{black}{c_0^{3}} h_K^2} \ge \frac{c_2 h_K }{2 \pi \textcolor{black}{c_0^{3}}}.
\end{split}
\end{equation*}
Thus, setting
\begin{equation}\label{eqn:c1Def}
c_1 = \textcolor{black}{\frac{2\pi c_0^3}{c_2}}, 
\end{equation}
we have $\frac{h_K}{\rho_K}\le c_1$.  Because $K\in \calT^{wf}_h$ was arbitrary,
we conclude $\max\limits_{K\in \calT_h^{wf}} \frac{h_K}{\rho_K}\le c_1$.
\end{proof}

\section{Conclusion}
We have settled a conjecture concerning the shape regularity of a Worsey-Farin refinement of a parent triangulation. As described in the introduction, this is a crucial bound to obtain approximation results for splines; see for example   \cite[Theorem 18.15]{lai2007spline}. However, based on initial numerical calculations, the  constant $c_1$ in Theorem \ref{thm:WFshape} that relates the shape regularity of the parent
triangulation (i.e., $c_0$) and its Worsey-Farin refinement
is most likely not sharp.  In particular, the theorem suggest that $c_1$ scales like $c_0^5$ which could be quite 
large
even for a good quality parent triangulation. 
We hope that this work leads to further investigations and sharper estimates will emerge.      

 \bibliographystyle{siam}
 \bibliography{ref}
 

\end{document}